\documentclass{article}

\title{\textbf{On the sum of $k$-th powers in terms of earlier sums}}
\author{Steven J. Miller (Williams College, sjm1@williams.edu) and\\ Enrique Trevi\~no (Lake Forest College, trevino@lakeforest.edu)}

\usepackage{amsmath,amssymb,amsthm,verbatim}

\newcounter{exercise}
\setcounter{exercise}{0}

\newtheorem{theorem}{Theorem}

\setlength{\evensidemargin}{0in}
\setlength{\oddsidemargin}{0in}
\setlength{\textwidth}{6.5in}
\setlength{\topmargin}{-.75in}
\setlength{\textheight}{9in}

\begin{document}
\maketitle

\begin{abstract}
For $k$ a positive integer let $S_k(n) = 1^k + 2^k + \cdots + n^k$, i.e.,
$S_k(n)$ is the sum of the first $k$-th powers. Faulhaber conjectured
(later proved by Jacobi) that for $k$ odd, $S_k(n)$ can be written as a
polynomial of $S_1(n)$, and for $k$ even, $S_k(n)$ can be written as $S_2(n)$ times a polynomial of $S_1(n)$ for example $S_3(n) = S_1(n)^2, S_4(n) = S_2(n)(\frac{6}{5}S_1(n) - \frac{1}{5})$. We give a proof of a variant of this result, namely that for any $k$ there is a polynomial $g_k(x,y)$ such
that $S_k(n) = g(S_1(n), S_2(n))$. The novel proof yields a recursive formula to
evaluate $S_k(n)$ as a polynomial of $n$ that has roughly half the number of terms
as the classical recursive formula that uses Pascal's identity.\end{abstract}



\section{Introduction}

One of the standard problems in elementary number theory is the evaluation of the $k$\textsuperscript{th} power sums: $S_k(n) = \sum_{m=1}^n m^k$. Almost everyone has heard the story of Gauss, at a young age, cleverly finding $S_1(100)$ by noting that if we write the numbers in reverse order and add them $$2 S_1(100) \ = \ (1 + 100) + (2 + 99) + \cdots + (99 + 2) + (100+1) \ = \ 100 \cdot 101, $$ and thus $S_1(100) = 5050$. The goal of this note is to explore consequences of similar tricks to find interesting formulas for other sums.

Our starting point is the well-known observation that $S_k(n)$ is a polynomial with rational coefficients of degree $k+1$. Frequently we use Lagrange interpolation, considering $S_k(0), S_k(1), \dots, S_k(k)$, to find a polynomial of degree $k+1$ that fits the curve, and then prove the polynomial works for all $n$ by induction. One could also prove it by using the Binomial Theorem to see that $$(n+1)^{k+1} - n^{k+1}\ =\ \sum_{\ell=0}^k {k+1\choose \ell}n^{\ell},$$ and then telescoping to get Pascal's identity \cite{Pascal}:
$$(n+1)^{k+1} - 1\ =\ \sum_{\ell = 0}^k {k+1\choose \ell}S_{\ell}(n);$$
thus
\begin{equation}\label{Pascaltrick}
S_k(n)\ =\ \frac{(n+1)^{k+1} - 1 - \sum_{\ell = 0}^{k-1} {k+1\choose \ell}S_{\ell}(n)}{k+1}.
\end{equation}
Note that under this technique, to find $S_k(n)$ as a polynomial of degree $k+1$ we need to have $S_1(n)$, $S_2(n)$, $\dots$, $S_{k-1}(n)$ evaluated.

An interesting variation was considered by Faulhaber, claiming in \cite{Faul} that \emph{for $k$ odd}, there is a polynomial $f_k(x)$ such that $\sum_{m=1}^n m^k = f_k(S_1(n)).$ This result was first proved by Jacobi \cite{Jacobi}, and Knuth gives a list of many Faulhaber polynomials in \cite{Knuth}. We include a few below, using $A(n) = S_1(n)$.
\begin{align*}
S_3(n) &\ = \  A^2(n)\\
S_5(n) &\ = \  \frac{4 A^3(n) - A^2(n)}{3}\\
S_7(n) &\ = \  \frac{12A^4(n) - 8A^3(n) + 2A^2(n)}{6}.
\end{align*}

If we confine ourselves to polynomials in $S_1(n)$, this is the end of the story, but what if we consider $S_2(n)$ as well? Is adding $S_2(n)$ enough to give us a polynomial $g_k(x,y)$ such that $S_k(n) = g_k(S_1(n),S_2(n))$? For $k$ odd, we have Jacobi's result from above, but it's also a classical result that for $k$ even, one can write $S_k(n)$ as the product of $S_2(n)$ with a polynomial (with rational coefficients) in $S_1(n)$ (see \cite{Beardon} or \cite[Theorem 3.5.3, p.~118-119]{pras} for elementary proofs). In this paper, we will describe a different algorithm to find $S_k(n)$ as a polynomial of $S_1(n)$ and $S_2(n)$. In particular, we will show that, for any positive integer $r$, there exist rationals $d_r, d_{r+1}, \ldots, d_{2r-1}, e_{r+1}, e_{r+2}, \ldots, e_{2r}$ such that
\begin{equation*}\label{odd case0}
S_{2r+1}(n) \ = \  \frac{r+1}{2}\left(S^2_r(n) - \sum_{i=r}^{2r-1} d_i S_i(n)\right),
\end{equation*}
and
$$S_{2r+2}(n) \ = \  \frac{(r+1)(r+2)}{2r+3}\left(S_r(n)S_{r+1}(n) - \sum_{i=r+1}^{2r} e_i S_i(n)\right).$$
These equations imply that we can compute $S_k(n)$ as a polynomial of degree $k+1$. One difference with \eqref{Pascaltrick} is that this identity has a weighted sum of $S_i(n)$ for $\lfloor\frac{k-1}{2}\rfloor \le i \le k-2$ as opposed to a weighted sum including all $i \le k-1$.

Furthermore, we get a different proof of the following classical theorem

\begin{theorem}\label{mainTHM}
Let $k$ be a positive integer. There exists a polynomial $g_k \in \mathbb{Q}[x,y]$ such that $g_k(0,0) = 0$  and
$$\sum_{m=1}^n m^k \ = \  g_k(S_1(n), S_2(n)).$$
\end{theorem}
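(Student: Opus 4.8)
\medskip
\noindent\textbf{Proof proposal.} My plan is to reduce Theorem~\ref{mainTHM} to a single ring-theoretic fact about $\mathbb{Q}[S_1(n),S_2(n)]$ and then verify that fact by a short induction, a route which is logically independent of the recursive identities above. Write $A = S_1(n) = \frac{n^2+n}{2}$ and $B = S_2(n) = \frac{2n^3+3n^2+n}{6}$, regarded as elements of $\mathbb{Q}[n]$. The key preliminary observation is that $S_k(n)$, as a polynomial in $n$, vanishes at both $n=0$ and $n=-1$: the first is the empty sum, and the second follows from the polynomial identity $S_k(n)-S_k(n-1)=n^k$ evaluated at $n=0$, which gives $S_k(-1)=S_k(0)=0$ for every $k\ge 1$. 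Hence $n(n+1)$ divides $S_k(n)$ in $\mathbb{Q}[n]$, that is, $S_k(n)=n(n+1)\,h(n)=2A\,h(n)$ for some $h\in\mathbb{Q}[n]$.

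It therefore suffices to prove the claim that $An^{j}\in\mathbb{Q}[A,B]$ for every integer $j\ge 0$. Granting this, $A\,h(n)=\sum_j h_j\,(An^j)$ lies in $\mathbb{Q}[A,B]$ whenever $h=\sum_j h_j n^j$, so $S_k(n)=2A\,h(n)=g_k(A,B)=g_k(S_1(n),S_2(n))$ for some $g_k\in\mathbb{Q}[x,y]$; moreover $A(0)=B(0)=0$ and $S_k(0)=0$, so evaluating at $n=0$ forces $g_k(0,0)=0$, as required. To prove the claim I would induct on $j$: the base cases are $An^0=A$ and $An^1=\frac{3}{2}B-\frac{1}{2}A$ (a direct degree-$3$ coefficient comparison), and for $j\ge 2$ I would use the relation $n^2=2A-n$ — a rearrangement of $n^2+n=2A$ — to write $An^{j}=An^{j-2}\cdot n^2=2A\,(An^{j-2})-An^{j-1}$; since $An^{j-2}$ and $An^{j-1}$ lie in $\mathbb{Q}[A,B]$ by the inductive hypothesis and $\mathbb{Q}[A,B]$ is a ring, so does $An^{j}$.

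Alternatively — and this is essentially the route the paper takes — one deduces Theorem~\ref{mainTHM} from the two recursive identities displayed above by strong induction on $k$: take $g_1(x,y)=x$ and $g_2(x,y)=y$ as base cases, and for $k=2r+1\ge 3$ (resp.\ $k=2r+2\ge 4$) observe that every power sum on the right-hand side has index in $[r,2r-1]$ (resp.\ in $[r,2r]$), which sits inside $[1,k-1]$, so substituting the inductively built $g_i$'s and then squaring (resp.\ multiplying) produces $g_k\in\mathbb{Q}[x,y]$; because each $g_i(0,0)=0$, the factored shape of the identities makes $g_k(0,0)=0$ automatic. In this second route the real obstacle is not the induction but establishing the two identities themselves; I would expect those to come from a Gauss-type telescoping such as $\sum_{m=1}^{n}\bigl(S_r(m)^2-S_r(m-1)^2\bigr)=S_r(n)^2$, in which each summand equals $m^r\bigl(2S_r(m)-m^r\bigr)$, combined with the functional equation $S_k(-1-n)=(-1)^{k+1}S_k(n)$, which forces the leftover correction to be a combination of power sums of a single parity, together with a count of the top two or three coefficients to bound the largest index that can appear. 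By contrast the first route has essentially no obstacle beyond the bookkeeping above — its one genuinely load-bearing idea is to factor out $n(n+1)=2S_1(n)$ from $S_k(n)$ before trying to rewrite it in $A$ and $B$, since a naive attempt to eliminate $n$ gets stuck on stray factors of $n$ (indeed $n\notin\mathbb{Q}[A,B]$, as $n$ takes different values at the two common zeros $0,-1$ of $A$ and $B$).
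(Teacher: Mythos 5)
Your first argument is correct and genuinely different from the paper's proof. The paper argues by strong induction on $k$: it expands the products $S_r(n)^2$ and $S_r(n)S_{r+1}(n)$ into a diagonal sum plus off-diagonal sums, substitutes the polynomial expression for $S_r(m-1)$, and reads off the top two coefficients (using $c_{(k+1)k}=\frac{1}{k+1}$, $c_{kk}=\frac{1}{2}$) to obtain the explicit recursions such as \eqref{odd case}; the theorem is then a byproduct of that induction. You instead prove the structural fact that the subring $\mathbb{Q}[A,B]\subseteq\mathbb{Q}[n]$ contains the entire ideal $A\,\mathbb{Q}[n]$ --- via the base cases $A$ and $An=\frac{3}{2}B-\frac{1}{2}A$ (which I verified) and the reduction $n^2=2A-n$ --- and then observe that every $S_k$ with $k\ge 1$ lies in that ideal because it vanishes at $n=0$ and $n=-1$, the latter following from the polynomial identity $S_k(n)-S_k(n-1)=n^k$ evaluated at $n=0$. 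I see no gap in this; your closing remark that $n\notin\mathbb{Q}[A,B]$, since every element of $\mathbb{Q}[A,B]$ takes equal values at the common zeros $0$ and $-1$, correctly explains why the factor of $A$ must be extracted before eliminating $n$. The trade-off is that your route is shorter and isolates the algebraic reason the theorem holds, but it presupposes that the coefficients of $S_k(n)$ in the monomial basis are already known, so it does not produce the paper's advertised recursion computing $S_k$ directly from $S_{\lfloor (k-1)/2\rfloor},\dots,S_{k-2}$ with roughly half the terms of Pascal's identity. Your second, alternative sketch is essentially the paper's proof; note only that the paper derives the two identities by directly splitting the products into diagonal and off-diagonal parts (equivalent to your telescoping of $S_r(m)^2-S_r(m-1)^2$, since $S_r(m)=S_r(m-1)+m^r$) and verifies $d_{2r}=0$ and $e_{2r+1}=0$ by explicit computation with the leading coefficients rather than by appealing to the reflection formula $S_k(-1-n)=(-1)^{k+1}S_k(n)$.
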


To illustrate some examples, letting $A(n) = S_1(n)$ and $B(n) = S_2(n)$, we find
\begin{align*}
S_3(n) &\ = \  A^2(n)\\
S_4(n) &\ = \  \frac{6}{5} A(n)B(n) -\frac{1}{5}B(n)\\
S_5(n) &\ = \  \frac{3}{2}B^2(n) - \frac{1}{2}A^2(n).\\
S_6(n) &\ = \  \frac{12}{7}A^2(n)B(n) - \frac{6}{7}A(n)B(n) + \frac{1}{7}B(n).
\end{align*}

In the next section we prove our claim, giving the motivation for our choice of $g_k$. There are other possible constructions, which we discuss afterwards, but we were led to this exploration by noting that we can easily find $S_3(n)$ knowing just $S_1(n)$ by cleverly expanding and having sums of squares perfectly cancel. 

\section{Proof of Theorem}

We begin with a few definitions, and then some motivation. For a non-negative integer $k$, define $c_{0k}, c_{1k},\\ \ldots, c_{(k+1)k} \in \mathbb{Q}$ such that
$$S_k(n) \ = \  c_{(k+1)k} n^{k+1} + c_{kk} n^k + \cdots + c_{1k} n + c_{0k}.$$
From \eqref{Pascaltrick} we can easily deduce that $c_{(k+1)k} = \frac{1}{k+1}$, $c_{kk} = \frac{1}{2}$ and $c_{0k} = 0$.\footnote{Bernoulli found an expression for all the coefficients in terms of Bernoulli numbers, but we won't need them in this article.}
Therefore we have
\begin{equation}\label{coeffs}
S_k(n) \ = \  \frac{1}{k+1} n^{k+1} + \frac{1}{2}n^k + c_{(k-1)k} n^{k-1} + \cdots + c_{1k} n.
\end{equation}

\begin{proof}[Proof of Theorem \ref{mainTHM}]
For $k = 1$ we have $g_1(x,y) = x$, since $\sum_{m=1}^n m^1 = A(n)$; similarly $g_2(x,y) = y$.
We show the idea for the proof by calculating $S_3(n)$ and $S_4(n)$. Consider
\begin{align*}
A^2(n) \ = \  \left(\sum_{m=1}^n m\right)^2 &\ = \  \sum_{m=1}^n m^2 + 2\sum_{m=1}^n m \sum_{\ell=1}^{m-1} \ell
\ = \  B(n) + 2\sum_{m=1}^n m \left(\frac{(m-1)m}{2}\right) \\&\ = \  \sum_{m=1}^n m^2 + \sum_{m=1}^n m^3 - \sum_{m=1}^n m^2 \ = \  \sum_{m=1}^n m^3.
\end{align*}
Therefore $S_3(n) = A^2(n)$, so $g_3(x,y) = x^2$. The perfect cancelation of the sum of squares above shows that $S_3(n)$ is just a function of $A(n)$, and was the motivation to see how easily other power sums could be expressed using just $A(n)$ and $B(n)$.

\ \\
For the fourth power sum, consider
\begin{align*}
A(n)B(n) &\ = \  \left(\sum_{m=1}^n m\right)\left(\sum_{m=1}^n m^2\right) \ =\ \sum_{m=1}^n m^3 + \sum_{m=1}^n m\sum_{\ell=1}^{m-1} \ell^2 + \sum_{m=1}^n m^2\sum_{\ell=1}^{m-1} \ell\\
&\ = \  A^2(n) + \sum_{m=1}^n m\frac{(m-1)m(2m-1)}{6} + \sum_{m=1}^n m^2\frac{(m-1)m}{2}\\
&\ = \  A^2(n) + \sum_{m=1}^n\frac{2m^4-3m^3+m^2}{6} + \sum_{m=1}^n \frac{m^4-m^3}{2}\\
&\ = \  A^2(n) + \frac{5}{6}\sum_{m=1}^n m^4 - \sum_{m=1}^n m^3 + \frac{1}{6}\sum_{m=1}^n m^2\\
&\ = \  A^2(n) +\frac{5}{6} \sum_{m=1}^n m^4 - A^2(n) + \frac{1}{6} B(n).
\end{align*}
Therefore
$$\sum_{m=1}^n m^4 \ =\ \frac{6}{5}A(n)B(n) - \frac{1}{5}B(n).$$
Hence $g_4(x,y) = \frac{6}{5}xy - \frac{1}{5}y.$

\ \\
These two cases suggest the general approach: consider an appropriate product of known sums, expand the product in terms of a diagonal and non-diagonal sum, and isolate the new sum in terms of previous ones which we know.

We proceed by induction. Assume that the theorem is true for $k\le 2r$ for some $r \ge 2$, then
\begin{align*}
S^2_{r}(n) \ = \  \left(\sum_{m=1}^{n} m^{r}\right)^2 &\ = \  \sum_{m=1}^n m^{2r} + 2\sum_{m=1}^n m^r \sum_{\ell =1}^{m-1} \ell^r\\
&\ = \  \sum_{m=1}^n m^{2r} + 2\sum_{m=1}^n \sum_{i=1}^{r+1} c_{i r} m^r (m-1)^i\\
&\ = \  \sum_{i=r}^{2r+1} d_{i}\sum_{m=1}^n m^i = \sum_{i=r}^{2r+1} d_i S_i(n),
\end{align*}
for some rational coefficients $d_i$. From \eqref{coeffs} we can deduce that $d_{2r+1} = \frac{2}{r+1}$, and $$d_{2r} \ = \  1 + 2(c_{rr} - (r+1)c_{(r+1)r}) \ = \  1+2\left(\frac{1}{2}-\frac{r+1}{r+1}\right) \ = \  0.$$ Thus
\begin{equation}\label{odd case}
S_{2r+1}(n) \ = \  \frac{r+1}{2}\left(S^2_r(n) - \sum_{i=r}^{2r-1} d_i S_i(n)\right).
\end{equation}
Therefore there exists $g_{2r+1}(x,y) \in \mathbb{Q}[x,y]$ such that $g_{2r+1}(A(n),B(n)) = S_{2r+1}(n)$. We can also verify $g_{2r+1}(0,0)=0$ from \eqref{odd case}.

The above deals with the odd cases. Now let's deal with the even cases.
\begin{align*}
S_{r}(n)S_{r+1}(n) &\ = \  \left(\sum_{m=1}^{n} m^{r}\right)\left(\sum_{m=1}^n m^{r+1}\right) \ = \  \sum_{m=1}^n m^{2r+1} + \sum_{m=1}^n m^r \sum_{\ell =1}^{m-1} \ell^{r+1} + \sum_{m=1}^n m^{r+1} \sum_{\ell =1}^{m-1} \ell^{r}\\
&\ = \  \sum_{m=1}^n m^{2r+1} + \sum_{m=1}^n \sum_{i=1}^{r+2} c_{i(r+1)} m^r (m-1)^i + \sum_{m=1}^n \sum_{i=1}^{r+1} c_{ir} m^{r+1} (m-1)^i\\
&\ = \ \sum_{i=r+1}^{2r+2} e_i\sum_{m=1}^n m^i \ = \  \sum_{i=r+1}^{2r+2}e_i S_i(n),
\end{align*}
for some rationals $e_i$. From \eqref{coeffs} we can deduce that $e_{2r+2} = \frac{1}{r+1}+\frac{1}{r+2} = \frac{2r+3}{(r+1)(r+2)}$, and $$e_{2r+1} \ = \  1-(r+2)c_{(r+2)(r+1)} +c_{(r+1)(r+1)}+c_{r r} - (r+1)c_{(r+1)r} \ = \  1-\frac{r+2}{r+2} + \frac{1}{2} + \frac{1}{2} - \frac{r+1}{r+1} \ = \  0.$$ Therefore
$$S_{2r+2}(n) \ = \  \frac{(r+1)(r+2)}{2r+3}\left(S_r(n)S_{r+1}(n) - \sum_{i=r+1}^{2r} e_i S_i(n)\right).$$
Since $S_i(n) = g_i(A(n),B(n))$, for $r\le i \le 2r$, $S_{2r+2}(n)$ can also be expressed as a polynomial of $A(n), B(n)$ with rational coefficients.
\end{proof}

\section{Future Work}

We end with a brief discussion of some additional questions that one can investigate. Now that we know the existence of a polynomial, it is natural to ask if it is unique, and if not then how many polynomials there are. It cannot be unique for all $k$; easy candidates occur when $k$ is odd, as Jacobi proved Faulhaber's conjecture that for $k$ odd the corresponding power sum can be written as a polynomial in just $A(n)$. For example, $S_5(n) = \frac{3}{2}B^2(n) - \frac{1}{2}A^2(n)$ shows that we can sometimes have multiple representations.

We sketch another approach, which might generate many different solutions. We know $S_k(n)$ is a polynomial in $n$ of degree $k+1$. As $A(n)$ is a polynomial of degree 2 and $B(n)$ of degree 3, we have $A^a(n) B^b(n)$ is a polynomial in $n$ of degree $2a+3b$. This suggests, for $2a + 3b = k+1$, looking at $$S_k(n) - \alpha_{a,b} A^a(n) B^b(n).$$ By choosing $\alpha_{a,b}$ appropriately what remains is a polynomial of degree at most $k$, and hence is of lower degree than $S_k(n)$. We would like to say we can continue by induction, replacing degree by degree the terms that remain with products of $A(n)$ and $B(n)$, but there is an obstruction. We can do this trick to remove the leading term only if its degree is at least 2; thus we could reach a situation where after several steps we are left with a linear term in $n$, and that \emph{cannot} be written as a polynomial in $A(n)$ and $B(n)$; this method would thus work only if a miracle occurred and there was no linear term. We leave the interesting question of how many polynomials work to the reader, as well as whether or not any of these polynomials have coefficients with interesting combinatorial significance.

An interesting question is whether or not we can write all power sums from some point onward as a polynomial of a finite number of fixed power sums. For example, if $k$ is sufficiently large is $S_k(n)$ a polynomial in $S_1(n), S_4(n)$ and $S_6(n)$? Or of $S_2(n), S_3(n)$ and $S_4(n)$? A related problem was solved by Beardon \cite{Beardon} who showed that given two distinct positive integers $i,j$, there exists a polynomial $T(x,y)$ with integer coefficients such that $T(S_i(n), S_j(n)) = 0$.

\section*{Acknowledgements}
We'd like to thank Bernd Keller and some anonymous referees for pointing out relevant references and helful suggestions that improved the paper.





\end{document}